\documentclass{amsart}
\usepackage{setspace}
\usepackage{a4}
\usepackage{amsthm}
\usepackage{latexsym}
\usepackage{amsfonts}
\usepackage{graphicx}
\usepackage{textcomp}
\usepackage{cite}
\usepackage{enumerate}
\usepackage{amssymb}
\usepackage{hyperref}
\usepackage{amsmath}
\usepackage{tikz}
\usepackage[mathscr]{euscript}
\usepackage{mathtools}
\newtheorem{theorem}{Theorem}[section]

\newtheorem{definition}[theorem]{Definition}
\newtheorem{example}[theorem]{Example}

\newtheorem{problem}[theorem]{Problem}
\newtheorem{proposition}[theorem]{Proposition}

\setlength{\parindent}{0pt} \setlength{\evensidemargin}{0.3cm}
\setlength{\oddsidemargin}{0.3cm} \setlength{\topmargin}{-1cm}
\textwidth 16cm \textheight 23cm
\onehalfspacing
\title{This is the title}
\raggedbottom

\usepackage{fancyhdr}

\pagestyle{fancy}
\fancyhead[LO]{\textbf{Functional Delsarte-Goethals-Seidel-Kabatianskii-Levenshtein-Pfender Bound}}
\fancyhead[RE]{\textbf{K. MAHESH KRISHNA}}

\begin{document}
\hrule\hrule\hrule\hrule\hrule
\vspace{0.3cm}	
\begin{center}
{\bf{Functional Delsarte-Goethals-Seidel-Kabatianskii-Levenshtein-Pfender Bound}}\\
\vspace{0.3cm}
\hrule\hrule\hrule\hrule\hrule
\vspace{0.3cm}
\textbf{K. Mahesh Krishna}\\
School of Mathematics and Natural Sciences\\
Chanakya University Global Campus\\
NH-648, Haraluru Village\\
Devanahalli Taluk, 	Bengaluru  Rural District\\
Karnataka State, 562 110, India\\
 Email: kmaheshak@gmail.com

Date: \today
\end{center}

\hrule\hrule
\hrule\hrule 
\vspace{0.5cm}
\textbf{Abstract}: Pfender \textit{[J. Combin. Theory Ser. A, 2007]} provided a one-line proof for a variant of the Delsarte-Goethals-Seidel-Kabatianskii-Levenshtein upper bound for spherical codes, which offers an upper bound for the celebrated (Newton-Gregory) kissing number problem. Motivated by this proof, we introduce the notion of codes in pointed metric spaces (in particular on Banach spaces) and derive a nonlinear (functional) Delsarte-Goethals-Seidel-Kabatianskii-Levenshtein-Pfender upper bound for spherical codes. We also introduce nonlinear (functional) Kissing Number Problem.

\textbf{Keywords}:  Spherical code, Kissing number, Linear programming.

\textbf{Mathematics Subject Classification (2020)}: 94B65, 54E35.\\

\hrule

\hrule
\section{Introduction}
A finite set of points satisfying certain conditions on the unit sphere in a Euclidean space plays an important role in many areas of Mathematics, Physics, Chemistry, Engineering, and Biology \cite{SAFFKUIJLAARS}. It is also one of the oldest concepts still being studied in science. Following are a few famous problems studying finite points on unit sphere.
\begin{enumerate}[\upshape(i)]
	\item Tammes problem \cite{CONWAYSLOANE}.
	\item Thomson problem (Problem 7 in the \textbf{Mathematical problems for the next century} by Smale) \cite{SMALE}.
	\item Spherical designs problem \cite{DELSARTEGOETHALSSEIDEL}.
	\item Spherical codes problem \cite{DELSARTEGOETHALSSEIDEL}.
	\item Equiangular lines problem \cite{LEMMENSSEIDEL}.
\end{enumerate}

In this paper, we are concerned about spherical codes. We recall the definition. 
\begin{definition}\cite{ZONG}
 Let $d\in \mathbb{N}$ and $\theta \in [0, 2\pi)$. A set $\{\tau_j\}_{j=1}^n$  of unit vectors  in $\mathbb{R}^d$	is said to be \textbf{$(d,n,\theta )$-spherical code} in $\mathbb{R}^d$ if 
\begin{align*}
	\langle \tau_j, \tau_k\rangle\leq \cos \theta , \quad \forall 1\leq j, k \leq n, j \neq k.
\end{align*}		
\end{definition}
 Fundamental problem associated with spherical codes is the following.
 \begin{problem}\label{SCP}
 	Given $d$ and $\theta$, what is the maximum $n$ such that there exists a $(d,n,\theta )$-spherical code $\{\tau_j\}_{j=1}^n$  in $\mathbb{R}^d$?
 \end{problem}
 The case $\theta=\pi/3$ is the famous \textbf{(Newton-Gregory) kissing number problem}. With extensive efforts from many mathematicians, it is  resolved only in following cases.
 \begin{enumerate}[\upshape(i)]
 	\item $d=1$, $n=2$ \cite{PFENDERZIEGLER}.
 	\item $d=2$, $n=6$ \cite{PFENDERZIEGLER}.
 	\item $d=3$, $n=12$ (Schutte and van der Waerden in 1953 \cite{SCHUTTEVANDER}). 
 	\item $d=4$, $n=24$ (due to Musin in 2008 \cite{MUSIN}).
 	\item  $d=8$, $n=240$ (due to Odlyzko and Sloane  in  1979 \cite{ODLYZKOSLOANE}).
 	\item 	 $d=24$, $n=196560$ (due to Odlyzko and Sloane in 1979 \cite{ODLYZKOSLOANE}).
 \end{enumerate}
 As spherical codes generalize kissing number problem, much less is known for spherical codes. We refer \cite{DELSARTEGOETHALSSEIDEL, ERICSONZINOVIEV, CONWAYSLOANE, COHNJIAOKUMARTORQUATO, BANNAIBANNAI, BACHOCVALLENTIN, JENSSENJOOSPERKINS, SARDARIZARGAR} for more information on spherical codes.
 Problem \ref{SCP} has connection even with sphere packing  \cite{COHNZHAO}. 
 
 So far, the most used method for obtaining upper bounds on spherical codes is the Delsarte-Goethals-Seidel-Kabatianskii-Levenshtein
bound which we recall. Let $n \in \mathbb{N}$ be fixed. The Gegenbauer polynomials are defined inductively as 
\begin{align*}
	G_0^{(n)}(r)&\coloneqq1,\quad \forall r \in [-1,1],\\
	G_1^{(n)}(r)&\coloneqq r,\quad \forall r \in [-1,1],\\
	&\quad\vdots\\
	G_k^{(n)}(r)&\coloneqq\frac{(2k+n-4)rG_{k-1}^{(n)}(r)-(k-1)	G_{k-2}^{(n)}(r)}{k+n-3}, \quad \forall r \in [-1,1], \quad \forall k \geq 2.
\end{align*}
Then the family $\{G_k^{(n)}\}_{k=0}^\infty$ is orthogonal on the interval $[-1,1] $ with respect to the weight 
\begin{align*}
	\rho(r)\coloneqq (1-r^2)^\frac{n-3}{2}, \quad \forall r \in [-1,1].
\end{align*}
\begin{theorem} \cite{DELSARTEGOETHALSSEIDEL, ERICSONZINOVIEV} (\textbf{Delsarte-Goethals-Seidel-Kabatianskii-Levenshtein Linear Programming Bound}) \label{DGS}
	Let $\{\tau_j\}_{j=1}^n$  be a  $(d,n,\theta )$-spherical code in $\mathbb{R}^d$. 
	Let $P$ be a real polynomial satisfying following conditions.
	\begin{enumerate}[\upshape(i)]
		\item $P(r)\leq 0$ for all $-1\leq r\leq \cos \theta$.
		\item Coefficients in the  Gegenbauer expansion 
		\begin{align*}
			P=\sum_{k=0}^{m}a_kG_k^{(n)}
		\end{align*}
	satisfy 
	\begin{align*}
a_0>0, \quad a_k\geq 0, ~\forall 1\leq k \leq m.
	\end{align*}
	\end{enumerate}
Then
\begin{align*}
	n \leq \frac{P(1)}{a_0}.
\end{align*}
\end{theorem}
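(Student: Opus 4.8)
\medskip
\noindent\textbf{Proof strategy (proposal).} The plan is to run the classical double-counting (linear programming) argument. I would set $\Sigma \coloneqq \sum_{j=1}^{n}\sum_{l=1}^{n} P(\langle \tau_j,\tau_l\rangle)$ and estimate $\Sigma$ both from below and from above. For the lower bound, first substitute the Gegenbauer expansion $P=\sum_{k=0}^{m}a_k G_k^{(n)}$ and interchange the finite sums to get $\Sigma=\sum_{k=0}^{m}a_k\,\Sigma_k$ with $\Sigma_k\coloneqq\sum_{j,l}G_k^{(n)}(\langle\tau_j,\tau_l\rangle)$. The one nontrivial ingredient is that each $\Sigma_k$ is nonnegative: the Gegenbauer polynomials are positive-definite kernels on the unit sphere, i.e.\ the matrix $\big(G_k^{(n)}(\langle\tau_j,\tau_l\rangle)\big)_{1\le j,l\le n}$ is positive semidefinite, so $\Sigma_k=\mathbf{1}^{\top}\big(G_k^{(n)}(\langle\tau_j,\tau_l\rangle)\big)\mathbf{1}\ge 0$. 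Concretely this comes from the addition formula $G_k^{(n)}(\langle x,y\rangle)=c_k\sum_i Y_{k,i}(x)Y_{k,i}(y)$ with $c_k>0$ and $\{Y_{k,i}\}_i$ an orthonormal basis of the degree-$k$ spherical harmonics, whence $\Sigma_k=c_k\sum_i\big(\sum_j Y_{k,i}(\tau_j)\big)^2\ge 0$; alternatively one may invoke Schoenberg's characterization of positive-definite functions on spheres. Since $G_0^{(n)}\equiv 1$ one has $\Sigma_0=n^2$, and together with $a_0>0$ and $a_k\ge 0$ for $1\le k\le m$ (condition (ii)) this gives $\Sigma\ge a_0 n^2$.

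Next I would obtain the upper bound by splitting $\Sigma$ into its diagonal and off-diagonal parts. On the diagonal, $\langle\tau_j,\tau_j\rangle=1$, contributing exactly $nP(1)$. For $j\neq l$, the spherical-code hypothesis together with $\|\tau_j\|=\|\tau_l\|=1$ gives $-1\le\langle\tau_j,\tau_l\rangle\le\cos\theta$, so condition (i) forces $P(\langle\tau_j,\tau_l\rangle)\le 0$; hence the off-diagonal part is $\le 0$ and $\Sigma\le nP(1)$. Chaining the two estimates yields $a_0n^2\le\Sigma\le nP(1)$, and dividing by $a_0 n>0$ produces $n\le P(1)/a_0$ (in particular $P(1)\ge a_0>0$).

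The routine parts here are the algebraic rearrangement of the double sum and the sign bookkeeping in the diagonal/off-diagonal split. The only step that requires anything beyond elementary manipulation — and hence the main obstacle — is the positive-semidefiniteness of the Gegenbauer kernels $\big(G_k^{(n)}(\langle\tau_j,\tau_l\rangle)\big)_{j,l}$, which I would cite from the spherical-harmonics literature (the addition theorem, or Schoenberg's theorem) rather than reprove. This is precisely the ingredient that Pfender's streamlined argument repackages, and it is the natural candidate to be replaced or reinterpreted when passing to the functional/nonlinear setting developed later in the paper.
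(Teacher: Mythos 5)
Your proposal is correct and is precisely the classical proof of this bound; the paper itself states Theorem \ref{DGS} only as a cited result (from \cite{DELSARTEGOETHALSSEIDEL, ERICSONZINOVIEV}) and gives no proof, so there is nothing internal to compare against. You correctly isolate the one non-elementary ingredient — positive semidefiniteness of the Gegenbauer kernels via the addition formula or Schoenberg's theorem — and the double-counting and diagonal/off-diagonal bookkeeping are exactly as in the standard argument (and mirror the paper's own proof of the Pfender-style Theorem \ref{FDGSP}, where hypothesis (i) simply replaces the positive-definiteness step). One cosmetic note: the superscript in $G_k^{(n)}$ in the paper's statement should really refer to the ambient dimension $d$ rather than the code size $n$; your proof implicitly uses the correct reading.
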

Theorem \ref{DGS} is primarily used to obtain upper bounds for spherical codes in any given dimension. Lower bounds are then derived using specific constructions of codes in each dimension. If we find a code with a specific number of points in a given dimension that also matches the upper bound from Theorem \ref{DGS}, we obtain an optimal code. However, finding both tight upper bounds and explicit code constructions in each dimension is extremely challenging due to the mysterious behavior of codes across dimensions.

Motivated from Theorem \ref{DGS},   Pfender in 2007 derived following breakthrough result.
 \begin{theorem}  \cite{PFENDER} (\textbf{Delsarte-Goethals-Seidel-Kabatianskii-Levenshtein-Pfender Bound}) \label{PFENDERT}
 	Let $\{\tau_j\}_{j=1}^n$  be a  $(d,n,\theta )$-spherical code in $\mathbb{R}^d$. Let $c>0$ and $\phi:[-1,1]\to \mathbb{R}$ be a function satisfying following.
 	\begin{enumerate}[\upshape(i)]
 		\item 
 		\begin{align*}
 			\sum_{j=1}^n\sum_{k =1}^n\phi(\langle \tau_j, \tau_k\rangle)\geq 0.	
 		\end{align*}
 		\item $\phi(r)+c\leq 0$ for all $-1\leq r \leq \cos \theta$.
 	\end{enumerate}
 Then 
 \begin{align*}
 	n\leq \frac{\phi(1)+c}{c}.
 \end{align*}
In particular, if $\phi(1)+c\leq 1$, then $n\leq 1/c$.
 \end{theorem}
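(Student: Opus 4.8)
The plan is to leverage hypothesis (i) by rewriting the double sum $\sum_{j,k}\phi(\langle\tau_j,\tau_k\rangle)$ in terms of the shifted function $r\mapsto\phi(r)+c$, whose sign on $[-1,\cos\theta]$ is pinned down by hypothesis (ii). Concretely, I would start from (i) and add $cn^{2}$ to both sides to get
\begin{align*}
cn^{2}\leq\sum_{j=1}^{n}\sum_{k=1}^{n}\bigl(\phi(\langle\tau_j,\tau_k\rangle)+c\bigr).
\end{align*}
Everything then reduces to bounding the right-hand side from above.

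To do that, split the double sum according to whether $j=k$ or $j\neq k$. Since each $\tau_j$ is a unit vector, $\langle\tau_j,\tau_j\rangle=1$, so the $n$ diagonal terms each equal $\phi(1)+c$. For the off-diagonal terms, the definition of a $(d,n,\theta)$-spherical code gives $-1\leq\langle\tau_j,\tau_k\rangle\leq\cos\theta$ whenever $j\neq k$, so hypothesis (ii) forces $\phi(\langle\tau_j,\tau_k\rangle)+c\leq 0$ for every such pair. Hence
\begin{align*}
\sum_{j=1}^{n}\sum_{k=1}^{n}\bigl(\phi(\langle\tau_j,\tau_k\rangle)+c\bigr)\leq n\bigl(\phi(1)+c\bigr),
\end{align*}
and combining with the previous display yields $cn^{2}\leq n(\phi(1)+c)$. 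Dividing by $cn>0$ (legitimate since $c>0$ and $n\geq 1$) gives $n\leq(\phi(1)+c)/c$, and the ``in particular'' clause is immediate: if $\phi(1)+c\leq 1$ then $n\leq(\phi(1)+c)/c\leq 1/c$.

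This is essentially a one-line computation in the spirit of Pfender, so I do not anticipate a genuine obstacle; the only points needing care are the clean separation of the diagonal contribution $n(\phi(1)+c)$ from the non-positive off-diagonal contribution, and checking that the sign conditions $c>0$, $n\geq1$ make the final division valid. An equivalent and perhaps slicker packaging is to note directly that $0\leq\sum_{j,k}\phi(\langle\tau_j,\tau_k\rangle)\leq n(\phi(1)+c)-cn^{2}$, from which the bound can simply be read off.
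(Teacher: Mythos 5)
Your proof is correct and follows essentially the same route as the paper's argument (given there for the generalized Theorem \ref{FDGSP}): shift to $\psi(r)=\phi(r)+c$, separate the diagonal terms $\psi(1)=\phi(1)+c$ from the non-positive off-diagonal terms, and compare with $cn^{2}\leq\sum_{j,k}\psi(\langle\tau_j,\tau_k\rangle)$. No gaps; the division by $cn>0$ and the ``in particular'' clause are handled correctly.
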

The beauty of Theorem \ref{PFENDERT} is that it does not require polynomials; any function satisfying the conditions of Theorem \ref{PFENDERT} is sufficient. Using this theorem, Pfender obtained improved upper bounds for the kissing number in dimensions 10, 16, 17, 25, and 26. He also derived new bounds for spherical codes in dimensions 3, 4, and 5. Furthermore, Pfender provided new proofs for the kissing numbers in dimensions 3 and 4.\\

In \cite{POLYA}, Polya gave the advice: ‘vary the problem’ whenever the solution to a problem is not known. In the context of Banach spaces, the following are a few such generalizations to metric spaces.
\begin{enumerate}[\upshape(i)]
	\item Metric type (due to Bourgain, Milman and Wolfson in 1986 \cite{BOURGAINMILMANWOLFSON}).
	\item Metric cotype (due to Mendel and Naor in 2008 \cite{MENDELNAOR}).
	\item Lipschitz p-summing operators (due to Farmer and Johnson in 2009 \cite{FARMERJOHNSON}).
	\item Metric frames (due to Krishna and Johnson in 2022 \cite{KRISHNAJOHNSON}).
	\item Regular simplex in a metric space (due to Cohn, Kumar and Minton in 2016 \cite{COHNKUMARMINTON}).
	\item Metric packings (due to Viazovska in 2018 \cite{VIAZOVSKA}).
\end{enumerate}
We carefully  formulate the notion of codes in pointed metric spaces (Definition \ref{MC}) and show that it matches with the existing spherical codes whenever we have Euclidean space (Proposition \ref{R}). We show that bound of Delsarte-Goethals-Seidel-Kabatianskii-Levenshtein-Pfender  can be extended for pointed metric spaces (in particular, for Banach spaces).

 \section{Metric Codes}
  Let $(\mathcal{M}, 0)$ be a pointed metric space. The collection 	$\operatorname{Lip}_0(\mathcal{M}, \mathbb{R})$
 is defined as $\operatorname{Lip}_0(\mathcal{M},\mathbb{R})\coloneqq \{f:\mathcal{M} 
 \rightarrow \mathbb{R}  \operatorname{ is ~ Lipschitz ~ and } f(0)=0\}.$
 For $f \in \operatorname{Lip}_0(\mathcal{M}, \mathbb{R})$, the Lipschitz norm
 is defined as 
 \begin{align*}
 	\|f\|_{\operatorname{Lip}_0}\coloneqq \sup_{x, y \in \mathcal{M}, x\neq
 		y} \frac{|f(x)-f(y)|}{d(x,y)}.
 \end{align*}
 We introduce metric  codes as follows.
 \begin{definition}\label{MC}
Let $(\mathcal{M}, 0)$ be a pointed metric space with metric $m$. 	 For $1\leq j \leq n$, let $f_j\in \operatorname{Lip}_0(\mathcal{M}, \mathbb{R})$ and $\tau_j \in \mathcal{M}$. The pair $ (\{f_j \}_{j=1}^n, \{\tau_j \}_{j=1}^n) $ is said to be a  \textbf{$(n,\theta)$-metric code} or \textbf{$(n,\theta)$-nonlinear code} or   \textbf{$(n,\theta)$-Lipschitz  code} in $ \mathcal{M}$
if following conditions hold.
	\begin{enumerate}[\upshape(i)]
	\item $\|f_j\|_{\operatorname{Lip}_0}=1$ for all $1\leq j \leq n$.
		\item $m(\tau_j,0)= 1$ for all $1\leq j \leq n$.
	\item 	$f_j(\tau_j)= 1$ for all $1\leq j \leq n$.
	\item $f_j(\tau_k)\leq \cos \theta $ for all $1\leq j,k \leq n, j \neq k$.
	\end{enumerate}
We call the case $\theta=\pi/3$  as the \textbf{nonlinear kissing number problem}.
 \end{definition}
For Banach spaces, we define (linear) functional codes as follows.
 \begin{definition}\label{BK}
	Let $\mathcal{X}$ be a real Banach space. 	 For $1\leq j \leq n$, let $f_j\in \mathcal{X}^*$ and $\tau_j \in \mathcal{X}$. The pair $ (\{f_j \}_{j=1}^n, \{\tau_j \}_{j=1}^n) $ is said to be a  \textbf{$(n,\theta)$-functional code} in $ \mathcal{X}$
	if following conditions hold.
	\begin{enumerate}[\upshape(i)]
		\item $\|f_j\|=1$ for all $1\leq j \leq n$.
		\item $\|\tau_j\|= 1$ for all $1\leq j \leq n$.
		\item 	$f_j(\tau_j)= 1$ for all $1\leq j \leq n$.
		\item $f_j(\tau_k)\leq \cos \theta $ for all $1\leq j,k \leq n, j \neq k$.
	\end{enumerate}
We call the case $\theta=\pi/3$  as the \textbf{functional kissing number problem}.
\begin{example}
		Let $\mathcal{X}$ be any  real $d$ dimensional Banach space. Auerbach lemma \cite{WOJTASZCZYK} says that there exists a collection $ \{f_j \}_{j=1}^d\subseteq \mathcal{X}^*$ and a collection $ \{\tau_j \}_{j=1}^d \subseteq \mathcal{X}$ satisfying following conditions.
			\begin{enumerate}[\upshape(i)]
			\item $\|f_j\|=1$ for all $1\leq j \leq d$.
			\item $\|\tau_j\|= 1$ for all $1\leq j \leq d$.
			\item 	$f_j(\tau_j)= 1$ for all $1\leq j \leq d$.
			\item $f_j(\tau_k)=0 $ for all $1\leq j,k \leq d, j \neq k$.
		\end{enumerate}
	Then  $ (\{f_j \}_{j=1}^d, \{\tau_j \}_{j=1}^d) $ is a $(d,\theta)$-functional code in $ \mathcal{X}$, for any $\theta$. 
\end{example}
\begin{example}
	Let $\mathcal{H}$ be any  real $d$ dimensional Hilbert space. Let $n\geq d$ and  $ \{\tau_j \}_{j=1}^n \subseteq \mathcal{H}$ be a collection  in  $\mathcal{H}$  satisfying following conditions.
	\begin{enumerate}[\upshape(i)]
		\item $\|\tau_j\|= 1$ for all $1\leq j \leq n$.
		\item There is a $\gamma\geq 0$ such that $|\langle \tau_j, \tau_k\rangle|=\gamma $ for all $1\leq j,k \leq n, j \neq k$.
	\end{enumerate}
Collections $ \{\tau_j \}_{j=1}^n $ satisfying above two conditions are known as equiangular lines \cite{WALDRON}. Let  $\theta\geq 0$ be such that $\gamma \leq \cos \theta$. Then $\{\tau_j \}_{j=1}^n$  is a $(d,\theta)$-code in $ \mathcal{H}$. In particular, an orthonormal basis is a $(d,\pi/2)$-code in $ \mathcal{H}$.
\end{example}
\begin{example}
	Let $\mathcal{H}$ be any  real $d$ dimensional Hilbert space. Let   $ \{\tau_j \}_{j=1}^n \subseteq \mathcal{H}$ be a collection of unit vectors  in  $\mathcal{H}$.  In particular, any finite normalized tight frames for Hilbert spaces \cite{BENEDETTOFICKUS}. Let $\theta\geq 0$ be such that 
	\begin{align*}
		\max_{1\leq j, k\leq n, j \neq k}\langle \tau_j, \tau_k \rangle \leq \cos \theta.
	\end{align*}
Then $\{\tau_j \}_{j=1}^n$  is a $(n,\theta)$-code in $ \mathcal{H}$.
\end{example}
\begin{example}
	Let $\mathcal{X}$ be any  real $d$ dimensional Banach space.  For $n \in \mathbb{N}$, let   $ \{f_j \}_{j=1}^n\subseteq \mathcal{X}^*$ and $ \{\tau_j \}_{j=1}^n \subseteq \mathcal{X}$ satisfy following conditions.
\begin{enumerate}[\upshape(i)]
	\item $\|f_j\|=1$ for all $1\leq j \leq n$.
	\item $\|\tau_j\|= 1$ for all $1\leq j \leq n$.
\end{enumerate}
In particular, any finite normalized frames for Banach spaces will satisfy above two conditions \cite{KORNELSON}. Let $\theta\geq 0$ be such that 
\begin{align*}
	\max_{1\leq j, k\leq n, j \neq k}f_j(\tau_k) \leq \cos \theta.
\end{align*}
Then  $ (\{f_j \}_{j=1}^n, \{\tau_j \}_{j=1}^n) $ is a $(n,\theta)$-functional code in $ \mathcal{X}$. 	
\end{example}
\begin{proposition}\label{R}
	For the space $\mathbb{R}^d$, Definition \ref{BK} matches with the spherical codes (in particular, with the kissing-number problem).
\end{proposition}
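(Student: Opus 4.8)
The plan is to identify a $(n,\theta)$-functional code in $\mathbb{R}^d$ (equipped with its Euclidean norm and inner product $\langle\cdot,\cdot\rangle$) with a $(d,n,\theta)$-spherical code via the Riesz representation theorem, and then check that the defining inequalities transform into each other. First I would recall that $\mathbb{R}^d$ is a Hilbert space, so for each $f_j\in(\mathbb{R}^d)^*$ there is a unique $v_j\in\mathbb{R}^d$ with $f_j(x)=\langle x,v_j\rangle$ for all $x$, and moreover $\|f_j\|=\|v_j\|$. Condition (i) of Definition \ref{BK} then becomes $\|v_j\|=1$, and condition (iii) becomes $\langle\tau_j,v_j\rangle=1$.

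The key step is the observation that, since $\|\tau_j\|=\|v_j\|=1$ by conditions (ii) and (i), the equality $\langle\tau_j,v_j\rangle=1$ saturates the Cauchy--Schwarz inequality $\langle\tau_j,v_j\rangle\le\|\tau_j\|\,\|v_j\|=1$; by the equality case this forces $v_j=\tau_j$ for every $1\le j\le n$. Hence a $(n,\theta)$-functional code in $\mathbb{R}^d$ is completely determined by the single family $\{\tau_j\}_{j=1}^n$ of unit vectors, and condition (iv) reads $f_j(\tau_k)=\langle\tau_k,\tau_j\rangle\le\cos\theta$ for all $j\neq k$ — exactly the defining inequality of a $(d,n,\theta)$-spherical code (using symmetry of the inner product, so that $\langle\tau_j,\tau_k\rangle\le\cos\theta$ for all $j\neq k$).

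Conversely, given a $(d,n,\theta)$-spherical code $\{\tau_j\}_{j=1}^n$, I would set $f_j\coloneqq\langle\cdot,\tau_j\rangle\in(\mathbb{R}^d)^*$ and verify directly that $\|f_j\|=\|\tau_j\|=1$, $\|\tau_j\|=1$, $f_j(\tau_j)=\langle\tau_j,\tau_j\rangle=1$, and $f_j(\tau_k)=\langle\tau_k,\tau_j\rangle\le\cos\theta$ for $j\neq k$, so that $(\{f_j\}_{j=1}^n,\{\tau_j\}_{j=1}^n)$ is a $(n,\theta)$-functional code. Specializing to $\theta=\pi/3$ identifies the functional kissing number problem with the classical (Newton--Gregory) kissing number problem in $\mathbb{R}^d$.

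\textbf{Main obstacle.} There is no serious obstacle here; the only point requiring care is invoking the equality case of Cauchy--Schwarz to collapse the pair $(\{f_j\},\{\tau_j\})$ down to a single spherical code, and remembering that this rigidity is special to the Hilbert (Euclidean) setting and is precisely what fails for general Banach spaces, which is why Definition \ref{BK} keeps the functionals and the vectors as separate data.
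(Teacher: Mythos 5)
Your proposal is correct and follows essentially the same route as the paper: invoke the Riesz representation theorem to write $f_j(x)=\langle x,\omega_j\rangle$ with $\|\omega_j\|=\|f_j\|=1$, and then use the equality case of Cauchy--Schwarz in $1=f_j(\tau_j)=\langle\tau_j,\omega_j\rangle\leq\|\tau_j\|\,\|\omega_j\|=1$ to force $\omega_j=\tau_j$, so that condition (iv) reduces to the spherical-code inequality. The only difference is that you also spell out the (immediate) converse construction $f_j\coloneqq\langle\cdot,\tau_j\rangle$, which the paper leaves implicit.
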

\begin{proof}
Let $ (\{f_j \}_{j=1}^n, \{\tau_j \}_{j=1}^n) $ be a $(n,\theta)$-functional code in $\mathbb{R}^d$. We need to show that $f_j$ is determined by $\tau_j$ for all $x\in \mathbb{R}^d$ and  for all $1\leq j \leq n$. Let $1\leq j \leq n$. From Riesz representation theorem, there exists a unique $w_j \in \mathbb{R}^d$ such that $f_j(x)=\langle x, \omega_j\rangle$  for all $x\in \mathbb{R}^d$ and $\|f_j\|=\|\omega_j\|$. Now we need to show that $\omega_j=\tau_j$. Since $\|f_j\|=1$, we must have $\|\omega_j\|=1$. But then
\begin{align*}
	1=f_j(\tau_j)=\langle \tau_j, \omega_j\rangle \leq \|\tau_j\|\|\omega_j\|=1.
\end{align*}
Therefore $\omega_j=\alpha \tau_j$  for some $\alpha \in \mathbb{R}$. The conditions $\|\omega_j\|=\|\tau_j\|=1$ and $\langle \tau_j, \omega_j\rangle =1$ then force $\omega_j=\tau_j$. 
\end{proof}
\end{definition}
Following is a nonlinear generalization of Theorem \ref{PFENDERT}.
\begin{theorem} (\textbf{Functional Delsarte-Goethals-Seidel-Kabatianskii-Levenshtein-Pfender Bound}) \label{FDGSP}
	Let $ (\{f_j \}_{j=1}^n, \{\tau_j \}_{j=1}^n) $  be a  $(n,\theta )$-metric  code in a  pointed metric space $\mathcal{M}$. Let $c>0$ and $\phi:[-1,1]\to \mathbb{R}$ be a function satisfying following.
\begin{enumerate}[\upshape(i)]
	\item 
	\begin{align*}
		\sum_{j=1}^n\sum_{k =1}^n\phi(f_j(\tau_k))\geq 0.	
	\end{align*}
	\item $\phi(r)+c\leq 0$ for all $-1\leq r \leq \cos \theta$.
\end{enumerate}
Then 
\begin{align*}
	n\leq \frac{\phi(1)+c}{c}.
\end{align*}
In particular, if $\phi(1)+c\leq 1$, then $n\leq 1/c$.	
\end{theorem}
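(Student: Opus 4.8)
The plan is to imitate Pfender's one-line argument for Theorem~\ref{PFENDERT}, the only genuinely new point being that we must first check that the numbers $f_j(\tau_k)$ all lie in $[-1,1]$, so that $\phi$ can legitimately be evaluated at them and condition~(i) of the statement makes sense. This is precisely where the pointed-metric structure is used: since $f_j\in\operatorname{Lip}_0(\mathcal{M},\mathbb{R})$ with $\|f_j\|_{\operatorname{Lip}_0}=1$ and $f_j(0)=0$, and since $m(\tau_k,0)=1$, we get
\[
|f_j(\tau_k)|=|f_j(\tau_k)-f_j(0)|\leq\|f_j\|_{\operatorname{Lip}_0}\,m(\tau_k,0)=1
\]
for all $1\leq j,k\leq n$. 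In the Hilbert-space original this step is the Cauchy--Schwarz inequality; here it is the Lipschitz bound together with the base-point normalization.

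Next I would split the double sum in condition~(i) along the diagonal $j=k$. For the diagonal terms, property~(iii) of Definition~\ref{MC} gives $f_j(\tau_j)=1$, so each of the $n$ diagonal summands equals $\phi(1)$. For the off-diagonal terms, property~(iv) gives $f_j(\tau_k)\leq\cos\theta$, and together with the lower bound $f_j(\tau_k)\geq-1$ just established this places $f_j(\tau_k)$ in $[-1,\cos\theta]$; hence condition~(ii) yields $\phi(f_j(\tau_k))\leq-c$ for each of the $n^2-n=n(n-1)$ off-diagonal pairs. Substituting into~(i) gives
\[
0\leq\sum_{j=1}^n\sum_{k=1}^n\phi(f_j(\tau_k))=\sum_{j=1}^n\phi(1)+\sum_{\substack{1\leq j,k\leq n\\ j\neq k}}\phi(f_j(\tau_k))\leq n\,\phi(1)-n(n-1)\,c.
\]

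Finally, since $n\geq1$ I would divide by $n>0$ to obtain $0\leq\phi(1)-(n-1)c$, i.e.\ $(n-1)c\leq\phi(1)$, and then, using $c>0$, rearrange to $n\leq 1+\phi(1)/c=(\phi(1)+c)/c$, which is the claimed bound. The ``in particular'' clause is then immediate: if $\phi(1)+c\leq1$ then $(\phi(1)+c)/c\leq1/c$, so $n\leq1/c$. I do not expect any real obstacle here --- the computation is only a handful of lines --- and the single place that demands attention is the opening step, where the boundedness $f_j(\tau_k)\in[-1,1]$ must be extracted from the Lipschitz norm and the normalization $f_j(0)=0$ rather than from an inner-product inequality.
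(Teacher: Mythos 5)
Your proof is correct and follows essentially the same route as the paper: split the double sum in condition (i) into diagonal terms (each equal to $\phi(1)$ by $f_j(\tau_j)=1$) and off-diagonal terms (each at most $-c$ by condition (ii)), then rearrange to get $n\leq(\phi(1)+c)/c$; the paper's substitution $\psi=\phi+c$ is only a cosmetic repackaging of the same algebra. Your opening verification that $|f_j(\tau_k)|\leq\|f_j\|_{\operatorname{Lip}_0}\,m(\tau_k,0)=1$, so that $\phi$ is actually defined at these points, is a step the paper silently omits but genuinely needs.
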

\begin{proof}
	Define $\psi:[-1,1]\ni r \mapsto \psi(r)\coloneqq \phi(r)+c\in \mathbb{R}$. Then 
	\begin{align*}
	\sum_{j=1}^n\sum_{k =1}^n\psi(f_j(\tau_k))&=\sum_{j=1}^{n}\psi(f_j(\tau_j))+\sum_{1\leq j,k \leq n, j \neq k}\psi(f_j(\tau_k))\\
	&=\sum_{j=1}^{n}\psi(1)+\sum_{1\leq j,k \leq n, j \neq k}\psi(f_j(\tau_k))\\
	&=n(\phi(1)+c)+\sum_{1\leq j,k \leq n, j \neq k}(\phi(f_j(\tau_k))+c)\\
	&\leq n(\phi(1)+c)+0=n(\phi(1)+c).
	\end{align*}
We also have 
\begin{align*}
	\sum_{j=1}^n\sum_{k =1}^n\psi(f_j(\tau_k))=	\sum_{j=1}^n\sum_{k =1}^n(\phi(f_j(\tau_k))+c)=	\sum_{j=1}^n\sum_{k =1}^n\phi(f_j(\tau_k))+cn^2.
\end{align*}
Therefore 
\begin{align*}
	cn^2\leq 	\sum_{j=1}^n\sum_{k =1}^n\phi(f_j(\tau_k))+cn^2\leq n(\phi(1)+c).
\end{align*}
\end{proof}
Following generalization of Theorem \ref{FDGSP} is easy.
\begin{theorem}
	Let $ (\{f_j \}_{j=1}^n, \{\tau_j \}_{j=1}^n) $  be a  $(n,\theta )$-metric  code in a  pointed metric space $\mathcal{M}$. Let $c>0$ and 
	\begin{align*}
		\phi:\{f_j(\tau_k):1\leq j, k \leq n\}\to \mathbb{R}	
	\end{align*}
	be a function satisfying following.
	\begin{enumerate}[\upshape(i)]
		\item 
		\begin{align*}
			\sum_{j=1}^n\sum_{k =1}^n\phi(f_j(\tau_k))\geq 0.	
		\end{align*}
		\item $\phi(r)+c\leq 0$ for all $ r \in \{f_j(\tau_k):1\leq j, k \leq n, j \neq k\}$.
	\end{enumerate}
	Then 
	\begin{align*}
		n\leq \frac{\phi(1)+c}{c}.
	\end{align*}
	In particular, if $\phi(1)+c\leq 1$, then $n\leq 1/c$.	
\end{theorem}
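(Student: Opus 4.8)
The plan is to observe that the final theorem is essentially a verbatim restatement of Theorem~\ref{FDGSP} with the domain of $\phi$ shrunk from the whole interval $[-1,1]$ to the finite set $S \coloneqq \{f_j(\tau_k) : 1 \leq j, k \leq n\}$, so the proof should be a line-by-line transcription of the proof of Theorem~\ref{FDGSP}, checking only that every evaluation of $\phi$ that occurs there stays inside $S$. First I would set $\psi : S \to \mathbb{R}$ by $\psi(r) \coloneqq \phi(r) + c$; this is well-defined precisely because $\psi$ is only ever evaluated at points of $S$. Then I would split the double sum $\sum_{j,k} \psi(f_j(\tau_k))$ into the diagonal terms $j = k$ and the off-diagonal terms $j \neq k$, exactly as before.

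For the diagonal part, by property~(iii) of a metric code we have $f_j(\tau_j) = 1$ for all $j$, and since $1 = f_j(\tau_j) \in S$, the value $\psi(1) = \phi(1) + c$ is defined and the diagonal contributes $n(\phi(1) + c)$. For the off-diagonal part, each $f_j(\tau_k)$ with $j \neq k$ lies in the set $\{f_j(\tau_k) : 1 \leq j, k \leq n, j \neq k\}$, which is exactly the set on which hypothesis~(ii) guarantees $\phi(r) + c \leq 0$; hence every off-diagonal term of $\sum \psi(f_j(\tau_k))$ is $\leq 0$, and the whole double sum is at most $n(\phi(1) + c)$. On the other hand, expanding $\psi = \phi + c$ directly gives $\sum_{j,k} \psi(f_j(\tau_k)) = \sum_{j,k} \phi(f_j(\tau_k)) + cn^2 \geq cn^2$ by hypothesis~(i). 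Combining the two estimates yields $cn^2 \leq n(\phi(1) + c)$, and dividing by $cn > 0$ gives $n \leq (\phi(1) + c)/c$; the final ``in particular'' clause follows by noting that $\phi(1) + c \leq 1$ forces $n \leq 1/c$.

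There is essentially no obstacle here: the only thing one must be careful about is that the restricted $\phi$ is never evaluated outside $S$, and the decomposition of the double sum is arranged so that this is automatic — diagonal evaluations land on $1 \in S$ and off-diagonal evaluations land in $\{f_j(\tau_k) : j \neq k\} \subseteq S$, which is also where hypothesis~(ii) applies. Thus the ``hard part,'' if any, is purely bookkeeping: verifying that replacing the continuum domain $[-1,1]$ by the finite sample set $S$ costs nothing, because Theorem~\ref{FDGSP}'s proof only ever touched $\phi$ at those finitely many points in the first place. I would therefore present the proof as: ``The proof is identical to that of Theorem~\ref{FDGSP}, noting that $\phi$ is evaluated only at points of $\{f_j(\tau_k):1\leq j,k\leq n\}$,'' optionally repeating the three-line computation for completeness.
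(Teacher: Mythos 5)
Your proposal is correct and matches the paper's intent exactly: the paper omits the proof, remarking only that the generalization "is easy," and the intended argument is precisely the one you give — repeat the proof of Theorem~\ref{FDGSP} verbatim, noting that $\phi$ is only ever evaluated at the diagonal value $1=f_j(\tau_j)$ and at the off-diagonal values $f_j(\tau_k)$, all of which lie in the restricted domain.
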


 \bibliographystyle{plain}
 \bibliography{reference.bib}

\end{document}